\newtheorem{thm}{Theorem}[section]
\newtheorem{cor}[thm]{Corollary}
\theoremstyle{definition}
\theoremstyle{remark}
\newtheorem{rem}[thm]{Remark}
\theoremstyle{example}
\numberwithin{equation}{section}
\newcommand{\eps}{\varepsilon}
\newcommand{\del}{\delta}
\def\dlim{\displaystyle\lim}
\newcommand{\B}{{\mathbb B}}
\newcommand{\D}{{\mathbb D}}
\newcommand{\R}{{\mathbb R}}
\newcommand{\SSS}{{\mathbb S}}
\newcommand{\C}{{\mathbb C}}
\newcommand{\N}{{\mathbb N}}
\newcommand{\dsum}{\displaystyle\sum}
\newcommand{\dsup}{\displaystyle\sup}
\newcommand{\calN}{{\mathcal N}}
\newcommand{\calQ}{{\mathcal Q}}
\begin{document}

\title{$\calN_p$-type functions with Hadamard gaps in the unit ball}%

\date{\today}%

\author{Bingyang Hu and Songxiao Li}%

\address{ Bingyang  Hu: Division of Mathematical Sciences, School of Physical and Mathematical Sciences, Nanyang Technological University (NTU), 637371 Singapore}%
\email{bhu2@e.ntu.edu.sg}

\address{Songxiao Li:  Faculty of Information Technology,
Macau University of Science and Technology,  Avenida Wai Long,
Taipa, Macau. }%
\email{jyulsx@163.com }

\subjclass[2010]{32A05, 32A36}%

\keywords{$\calN_p$-space, Hadamard gaps, Bergman-type space}%


\maketitle

\begin{abstract}
We study the holomorphic functions with Hadamard gaps in
$\calN_p$-spaces on the unit ball   of $\C^n$ when $0<p \le n$ and
$p>n$. A corollary on analytic functions with Hadamard gaps on
$\calN_p$-spaces on the unit disk is also given.
\end{abstract}


\section{Introduction}


Let $\B$ be the open unit ball in $\C^n$ with $\SSS$ as its boundary
and $H(\B)$ the collection of all holomorphic functions in $\B$.
$H^\infty$ denotes the Banach space consisting of all bounded
holomorphic functions in $\B$ with the norm $\|f\|_{\infty}=\sup_{z
\in \B} |f(z)|$. The  Bergman-type space $A^{-p}(\B)$ is the space
of all   $f\in H(\B)$ such that
$$|f|_p=\sup_{z \in \B} |f(z)|(1-|z|^2)^p<\infty.$$
Let $A^{-p}_0(\B)$ denote the closed subspace of  $A^{-p}(\B)$ such
that  $\lim_{|z|\rightarrow 1} |f(z)|(1-|z|^2)^p=0.$

The $\calN_p$-space in the unit disk $\D$ was first introduced in \cite{Pal} and studied in \cite{Uek}, which is defined as, for $p>0$,
$$
\calN_p(\D)=\left\{f \in H(\D): \|f\|_p=\sup_{a \in \D}
\left(\int_{\D}
|f(z)|^2(1-|\sigma_a(z)|^2)^pdA(z)\right)^{1/2}<\infty\right\},
$$
where $dA$ is the normalized area measure over $\D$ and
$\sigma_a(z)=\frac{a-z}{1-\bar{a}z} $ is a M\"obius transformation
of $\D$.

Let $dV$ denote the normalized volume measure over $\B$ and
$\Phi_a(z)$  the automorphism of $\B$ for $a \in \B$, i.e.,
$$ \Phi_a(z)=\frac{a-P_az-s_aQ_a z}{1-\langle z,a\rangle},
$$
where $s_a=  \sqrt{1-|a|^2} $, $P_a$ is the orthogonal projection
into the space spanned by $a$ and $Q_a=I-P_a$ (see, e.g.,
\cite{Zhu}).  The $\calN_p$-space on $\B$ was introduced in
\cite{HK1}, i.e.,
\[ \begin{split}
\calN_p&=\calN_p(\B)\\
&=\left\{f \in H(\B): \|f\|_p=\sup_{a \in \B} \left(\int_{\B}
|f(z)|^2(1-|\Phi_a(z)|^2)^pdV(z)\right)^{1/2}<\infty\right\}.\end{split}
\]   The little space of $\calN_p$-space, denoted by
$\calN_p^0$, which consisting of all $f \in \calN_p$ such that
$$
  \lim_{|a| \to 1^{-}} \int_{\B}
|f(z)|^2(1-|\Phi_a(z)|^2)^pdV(z) =0 .
$$

In \cite{HK1}, several basic properties of $\calN_p(\B)$-spaces are
proved, in connection with the Bergman-type spaces $A^{-q}$. In
particular, an embedding theorem for $\calN_p(\B)$-spaces and
$A^{-q}(\B)$ is obtained, together with other useful properties.

\begin{thm}\label{basic}\cite{HK1}
The following statements hold:
\begin{enumerate}
\item [(a)] For $p>q>0$, we have $H^{\infty}\hookrightarrow\calN_q\hookrightarrow\calN_p\hookrightarrow A^{-\frac{n+1}{2}}$.
\item [(b)] For $p>0$, if $p>2k-1, k\in(0,\frac{n+1}{2}]$, then $A^{-k}\hookrightarrow \calN_p$. In particular, when $p>n$, $\calN_p=A^{-\frac{n+1}{2}}$.
\item [(c)] $\calN_p$  is a functional Banach space with the norm $\|\cdot\|_p$, and moreover, its norm topology is stronger than the compact-open topology.
\end{enumerate}
\end{thm}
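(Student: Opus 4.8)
The plan is to build everything on two standard facts about the involutive automorphisms $\Phi_a$: the M\"obius identity
$$
1-|\Phi_a(z)|^2=\frac{(1-|a|^2)(1-|z|^2)}{|1-\langle z,a\rangle|^2},
$$
together with the change-of-variables formula $dV(\Phi_a(w))=\frac{(1-|a|^2)^{n+1}}{|1-\langle w,a\rangle|^{2(n+1)}}\,dV(w)$, and on the Forelli--Rudin integral estimates: for $c>-1$ the integral $\int_\B\frac{(1-|w|^2)^c}{|1-\langle a,w\rangle|^{n+1+c+t}}\,dV(w)$ is bounded in $a$ when $t<0$, grows like $\log\frac{1}{1-|a|^2}$ when $t=0$, and like $(1-|a|^2)^{-t}$ when $t>0$. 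Since $0\le 1-|\Phi_a(z)|^2\le 1$, the embedding $\calN_q\hookrightarrow\calN_p$ for $q<p$ is immediate from $(1-|\Phi_a(z)|^2)^p\le(1-|\Phi_a(z)|^2)^q$, which gives $\|f\|_p\le\|f\|_q$.

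For the bound $H^\infty\hookrightarrow\calN_q$ I would estimate $\int_\B|f|^2(1-|\Phi_a(z)|^2)^q\,dV\le\|f\|_\infty^2\int_\B(1-|\Phi_a(z)|^2)^q\,dV(z)$ and compute the last integral by the substitution $z=\Phi_a(w)$; using the two identities above it reduces to $(1-|a|^2)^{n+1}\int_\B\frac{(1-|w|^2)^q}{|1-\langle w,a\rangle|^{2(n+1)}}\,dV(w)$, which the integral estimate (with $c=q$, $t=n+1-q$) shows is uniformly bounded in $a$, giving $\|f\|_q\lesssim\|f\|_\infty$. The remaining and more delicate inclusion $\calN_p\hookrightarrow A^{-(n+1)/2}$ is a \emph{pointwise} estimate: choosing $a=z$ in the definition, substituting $w=\Phi_z(u)$, and restricting the resulting integral to $|u|<1/2$ (where the kernel factors are bounded below) yields $|f(z)|^2(1-|z|^2)^{n+1}\lesssim\int_{|u|<1/2}|f(\Phi_z(u))|^2\,dV(u)\lesssim\|f\|_p^2$, the last step being the sub-mean-value inequality for the plurisubharmonic function $u\mapsto|f(\Phi_z(u))|^2$ at $u=0$. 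This is the step I expect to be the main obstacle, since it is what converts the integral control into a usable pointwise bound and underlies parts (a) and (c) alike.

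Part (b) is a direct computation of the same type. Writing $|f(z)|\le|f|_k(1-|z|^2)^{-k}$ and substituting $z=\Phi_a(w)$ transforms $\int_\B|f|^2(1-|\Phi_a(z)|^2)^p\,dV$ into a constant multiple of
$$
(1-|a|^2)^{\,n+1-2k}\int_\B\frac{(1-|w|^2)^{\,p-2k}}{|1-\langle w,a\rangle|^{\,2(n+1)-4k}}\,dV(w).
$$
The hypothesis $p>2k-1$ is exactly what makes the exponent $c=p-2k>-1$ admissible, and with $t=(n+1)-2k-p$ the three cases of the integral estimate each combine with the prefactor to give a quantity $\lesssim(1-|a|^2)^p$, $\lesssim(1-|a|^2)^{n+1-2k}\log\frac{1}{1-|a|^2}$, or $\lesssim(1-|a|^2)^{n+1-2k}$, all bounded since $k\le(n+1)/2$; hence $\|f\|_p\lesssim|f|_k$. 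Taking $k=(n+1)/2$ (so $2k-1=n$) gives $A^{-(n+1)/2}\hookrightarrow\calN_p$ for $p>n$, and combined with part (a) this forces $\calN_p=A^{-(n+1)/2}$ with equivalent norms.

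For part (c), I would first check that $\|\cdot\|_p$ is a norm: homogeneity is clear, the triangle inequality follows by applying Minkowski's inequality in $L^2((1-|\Phi_a(z)|^2)^p\,dV)$ for each fixed $a$ and then taking the supremum, and definiteness follows from the pointwise estimate of part (a), since $\|f\|_p=0$ forces $|f(z)|(1-|z|^2)^{(n+1)/2}=0$ for all $z$. That same estimate shows point evaluations are continuous and that $\|\cdot\|_p$-convergence dominates uniform convergence on compact subsets, which is the claim that the norm topology is stronger than the compact-open topology. Completeness then follows the standard route: a $\|\cdot\|_p$-Cauchy sequence is, by the pointwise bound, uniformly Cauchy on compacta and so converges locally uniformly to some $f\in H(\B)$; Fatou's lemma applied inside the defining integral shows $f\in\calN_p$ and $\|f_j-f\|_p\to0$. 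This establishes that $\calN_p$ is a functional Banach space.
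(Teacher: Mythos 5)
This theorem is quoted in the paper from \cite{HK1} and no proof is given there, so there is no in-paper argument to compare against; I can only assess your proposal on its own terms. Your route is the standard one for these spaces and it is essentially correct: monotonicity of $t\mapsto t^p$ on $[0,1]$ for the chain $\calN_q\hookrightarrow\calN_p$, the M\"obius change of variables plus the Forelli--Rudin estimates for $H^\infty\hookrightarrow\calN_q$ and for part (b), the sub-mean-value trick on the pseudo-hyperbolic ball for the pointwise bound $|f(z)|(1-|z|^2)^{(n+1)/2}\lesssim\|f\|_p$, and then part (c) riding on that pointwise bound in the usual way. Your bookkeeping in (b) is right: $c=p-2k>-1$ is exactly $p>2k-1$, and in each of the three Forelli--Rudin regimes the prefactor $(1-|a|^2)^{n+1-2k}$ absorbs the growth (note the logarithmic case only occurs when $p=n+1-2k>0$, which forces $k<\frac{n+1}{2}$ strictly, so the claim ``all bounded since $k\le\frac{n+1}{2}$'' is safe but for that reason, not in general).

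One display is wrong as literally written: in the chain $|f(z)|^2(1-|z|^2)^{n+1}\lesssim\int_{|u|<1/2}|f(\Phi_z(u))|^2\,dV(u)\lesssim\|f\|_p^2$, the second inequality is false (the average of $|f\circ\Phi_z|^2$ over the half-ball need not be controlled by $\|f\|_p^2$ without the factor $(1-|z|^2)^{n+1}$, and can blow up as $|z|\to1^-$). The correct chain is $|f(z)|^2(1-|z|^2)^{n+1}\lesssim(1-|z|^2)^{n+1}\int_{|u|<1/2}|f(\Phi_z(u))|^2\,dV(u)\lesssim\|f\|_p^2$, where the first step is the sub-mean-value inequality and the second comes from bounding the Jacobian kernel below on $|u|<1/2$. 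This is a transcription slip rather than a gap --- your surrounding prose describes the correct argument --- but it should be fixed, since this pointwise estimate is the load-bearing step for both the tail of (a) and all of (c).
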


An   $f \in H(\B)$ written in the form
$$
f(z)=\sum_{k=0}^{\infty} P_{n_k}(z),
$$
where $P_{n_k}$ is a homogeneous polynomial of degree $n_k$, is said
to have \textit{Hadamard gaps} if for some $c>1$ (see. e.g.,
\cite{SS}),
$$
 \frac{n_{k+1}}{n_k}\ge c,\ \forall k \ge 0.
$$

Hadamard gaps series on spaces of holomorphic functions in $\D$ or
in $\B$ has been studied quite well. We refer the readers to the
related results in \cite{KLA1, KLA2, JSC, GP, LS, XM, JM, SS, SS1,
WZ, SY,  zz, Zhu2} and the reference therein.

The aim of the present paper is to characterize the holomorphic functions with Hadamarad gaps in $\calN_p$-space for
two different cases $0<p \le n$ and $p>n$. Our main results are contained in Section $2$.

Throughout this paper, for $a, b \in \R$, $a \lesssim b$ ($a \gtrsim
b$, respectively) means there exists a positive number $C$, which is
independent of $a$ and $b$, such that $a \leq Cb$ ($ a \geq Cb$,
respectively). Moreover, if both $a \lesssim b$ and $a \gtrsim b$
hold, then we say $a \simeq b$.


\section{Main results and proofs}



To formulate our main result, we denote
$$
M_k=\sup_{\xi \in \SSS} |P_{n_k}(\xi)| \quad \textrm{and} \quad
L_k=\left(\int_{\xi \in \SSS} |P_{n_k}(\xi)|^2d\sigma(\xi)
\right)^{1/2},
$$
where $d\sigma$ is the normalized surface measure on $\SSS$, that is, $\sigma(\SSS)=1$. Clearly for each $k \ge 0$, $M_k$ and $L_k$ are well-defined.


\subsection{The case when $0<p \le n$.}

In this subsection, we study the Hadamard gaps series in $\calN_p$-spaces when $0<p \le n$. We have the following result.

\begin{thm} \label{Hadamard01}
Let $0<p \le n$ and $f(z)=\dsum_{k=0}^{\infty} P_{n_k}(z)$ with
Hadamard gaps. Considering the following statements.
\begin{enumerate}
\item[(a)] $\dsum_{k=0}^{\infty} \bigg( \frac{1}{2^{k(1+p)}} \sum_{2^k \leq n_j <2^{k+1}} M_j^2\bigg)<\infty$;
\item[(b)] $f \in \calN_p^0$;
\item[(c)] $f \in \calN_p$;
\item[(d)] $\dsum_{k=0}^{\infty} \bigg( \frac{1}{2^{k(1+p)}} \sum_{2^k \leq n_j <2^{k+1}} L_j^2\bigg)<\infty$.
\end{enumerate}
We have $(a) \Longrightarrow (b) \Longrightarrow (c)
\Longrightarrow (d)$.
\end{thm}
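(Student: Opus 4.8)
The plan is to dispatch the two membership-comparison steps directly and to concentrate the real work on $(a)\Rightarrow(b)$. The implication $(b)\Rightarrow(c)$ is immediate, since $\calN_p^0$ is by definition a subspace of $\calN_p$. For $(c)\Rightarrow(d)$ I would use only the test point $a=0$: since $f\in\calN_p$ forces $\int_\B |f(z)|^2(1-|z|^2)^p\,dV(z)<\infty$, and since homogeneous polynomials of distinct degrees are orthogonal on $\SSS$, passing to polar coordinates gives
\[
\int_\B |f(z)|^2(1-|z|^2)^p\,dV(z)=2n\dsum_{k=0}^\infty L_k^2\dint_0^1 r^{2n-1+2n_k}(1-r^2)^p\,dr .
\]
Each integral is a Beta function, and Stirling's formula yields $\dint_0^1 r^{2n-1+2n_k}(1-r^2)^p\,dr\simeq n_k^{-(1+p)}$; the Hadamard gap condition then collapses $\dsum_k L_k^2 n_k^{-(1+p)}$ into the dyadic sum in $(d)$, since each block $[2^m,2^{m+1})$ contains at most $N=N(c)$ of the $n_j$ and there $n_j^{-(1+p)}\simeq 2^{-m(1+p)}$. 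This gives $(c)\Rightarrow(d)$.

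For $(a)\Rightarrow(b)$ my strategy is to reduce everything to one uniform norm estimate: for \emph{any} Hadamard gap series $g=\dsum_k P_{n_k}$,
\begin{equation*}
\|g\|_p^2\ \lesssim\ \dsum_{m=0}^\infty \frac{1}{2^{m(1+p)}}\dsum_{2^m\le n_j<2^{m+1}}M_j^2 .
\tag{$\star$}
\end{equation*}
Granting $(\star)$, the argument closes quickly: each partial sum $\dsum_{k<K}P_{n_k}$ is a polynomial, hence (by dominated convergence, using $1-|\Phi_a(z)|^2\le 1$ and the integrability of $|P|^2$ on $\B$) belongs to $\calN_p^0$; applying $(\star)$ to the tails $\dsum_{k\ge K}P_{n_k}$, which are again Hadamard gap series, shows under hypothesis $(a)$ that the partial sums converge to $f$ in $\|\cdot\|_p$; and since $\calN_p^0$ is closed in the Banach space $\calN_p$ (Theorem \ref{basic}(c), with closedness of the little space being standard), the limit $f$ lies in $\calN_p^0$. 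This simultaneously yields $(c)$.

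To prove $(\star)$ I would start from $1-|\Phi_a(z)|^2=\frac{(1-|a|^2)(1-|z|^2)}{|1-\langle z,a\rangle|^2}$, so that
\[
\int_\B |g(z)|^2(1-|\Phi_a(z)|^2)^p\,dV(z)=(1-|a|^2)^p\int_\B \frac{|g(z)|^2(1-|z|^2)^p}{|1-\langle z,a\rangle|^{2p}}\,dV(z).
\]
Bounding $|g(z)|\le G(|z|):=\dsum_k M_k|z|^{n_k}$ and integrating the angular variable first via the standard estimate for $J(s):=\int_\SSS |1-s\langle\xi,\eta\rangle|^{-2p}\,d\sigma(\xi)$, which is bounded when $p<n/2$, logarithmic when $p=n/2$, and comparable to $(1-s^2)^{n-2p}$ when $p>n/2$, reduces matters to the one-dimensional integral $(1-\rho^2)^p\dint_0^1 r^{2n-1}(1-r^2)^p G(r)^2 J(r\rho)\,dr$ with $\rho=|a|$. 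I would then group $G$ into dyadic blocks: since each block $[2^m,2^{m+1})$ contains at most $N$ of the $n_k$, Cauchy--Schwarz gives $G(r)\lesssim \dsum_m \sqrt{A_m}\,r^{2^m}$, where $A_m:=\dsum_{2^m\le n_j<2^{m+1}}M_j^2$.

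The main obstacle is precisely this final one-dimensional estimate, uniform in $\rho\in[0,1)$. The difficulty is the coupling between the three length scales $1-r$, $1-\rho$ and the block index $m$, together with the off-diagonal cross terms in $G(r)^2=\big(\dsum_m\sqrt{A_m}\,r^{2^m}\big)^2$. I would control it by splitting the $r$-integral over dyadic rings $1-r\simeq 2^{-j}$, using $1-r\rho\gtrsim\max(1-r,1-\rho)$ to tame $J(r\rho)$, and exploiting that $r^{2^m}$ is essentially $1$ for $m\le j$ and geometrically small for $m>j$. Because the exponents $2^m$ are themselves lacunary, all resulting sums are dominated by geometric series, so the cross terms do not overwhelm the diagonal and the whole expression collapses to $\dsum_m A_m\,2^{-m(1+p)}$, uniformly in $\rho$; the three cases for $J$ are handled the same way, only the exponent bookkeeping changing. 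A consistency check at $\rho=0$ (where $J\equiv1$) already reproduces the diagonal contribution $\dsum_m A_m\,2^{-m(1+p)}$, which is reassuring.
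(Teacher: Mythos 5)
Your overall architecture coincides with the paper's: $(b)\Rightarrow(c)$ by inclusion, $(c)\Rightarrow(d)$ by testing at $a=0$ plus orthogonality of homogeneous polynomials of distinct degrees, and $(a)\Rightarrow(b)$ via a uniform norm estimate $(\star)$ combined with approximation by the polynomial partial sums and closedness of $\calN_p^0$. The one place where your write-up falls short of a proof is exactly the step you flag as ``the main obstacle'': the uniform-in-$\rho$ one-dimensional estimate behind $(\star)$ is left as a plan (dyadic rings in $r$, cross terms in $G(r)^2$, three cases for $J$), not carried out. Two remarks. First, the obstacle is illusory: the inequality $1-r\rho\ge 1-\rho$, which you already invoke, shows directly that
\begin{equation*}
(1-\rho^2)^p\,J(r\rho)\ \lesssim\ \begin{cases} (1-\rho^2)^p, & 0<p<\tfrac n2,\\ (1-\rho^2)^{n/2}\log\tfrac{1}{1-\rho^2}, & p=\tfrac n2,\\ (1-\rho^2)^{n-p}, & \tfrac n2<p\le n,\end{cases}
\end{equation*}
and each right-hand side is bounded uniformly in $\rho\in[0,1)$ precisely because $p\le n$. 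This decouples $\rho$ from $r$ entirely and reduces $(\star)$ to the single integral $\dint_0^1\big(\dsum_k M_k r^{n_k}\big)^2(1-r^2)^p\,dr$, with no coupling of length scales left to manage. Second, what then remains is the classical asymptotic
\begin{equation*}
\dint_0^1\Big(\dsum_{k} M_k r^{n_k}\Big)^2(1-r^2)^p\,dr\ \simeq\ \dsum_{m=0}^\infty \frac{1}{2^{m(1+p)}}\Big(\dsum_{2^m\le n_j<2^{m+1}}M_j\Big)^2,
\end{equation*}
which the paper simply cites (Mateljevi\'c--Pavlovi\'c) and then converts to the form in $(a)$ by Cauchy--Schwarz, using that each dyadic block contains at most $[\log_c 2]+1$ of the $n_j$. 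Your proposed dyadic cross-term analysis is in effect a reproof of this known result; it can be pushed through (the off-diagonal terms are summable by a Schur-type estimate since $\sqrt{A_mA_{m'}}\,2^{-\max(m,m')(1+p)}$ decays geometrically in $|m-m'|$), but as written it is a sketch, and you should either complete it or cite the standard lemma. The rest of your argument, including the use of dominated convergence to place each partial sum in $\calN_p^0$ and the tail estimate from $(\star)$ to get norm convergence, is correct and matches the paper.
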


\begin{proof}
 $\bullet \ (a) \Longrightarrow (b).$ Suppose that $(a)$ holds. First, we prove that $f \in \calN_p$.
  For $f(z)=\sum_{k=0}^{\infty} P_{n_k}(z)$, by using the polar coordinates and \cite[Lemma 1.8]{Zhu}, we have
\begin{eqnarray*}
&& \|f\|_p^2 =\sup_{a \in \B} \int_{\B} \left| \sum_{k=0}^{\infty} P_{n_k}(z)\right|^2(1-|\Phi_a(z)|^2)^pdV(z) \\
&\le&\sup_{a \in \B} \int_{\B} \left(\sum_{k=0}^{\infty}|P_{n_k}(z)|\right)^2 \frac{(1-|a|^2)^p(1-|z|^2)^p}{|1-\langle z, a \rangle|^{2p}}dV(z)  \\
&=& \sup_{a \in \B} \left\{(1-|a|^2)^p \int_{\B} \left(\sum_{k=0}^{\infty}|P_{n_k}(z)|\right)^2 \frac{(1-|z|^2)^p}{|1-\langle z, a \rangle|^{2p}}dV(z)  \right\}\\
&\le & 2n\sup_{a \in \B} \left\{(1-|a|^2)^p \int_0^1  \left(\sum_{k=0}^{\infty}|P_{n_k}(r\xi)|\right)^2(1-r^2)^p \left(\int_{\SSS} \frac{1}{|1-\langle r\xi, a\rangle|^{2p}}d\sigma(\xi)  \right)dr\right\}\\
&=& 2n \sup_{a \in \B} \left\{(1-|a|^2)^p \int_0^1  \left(\sum_{k=0}^{\infty}|P_{n_k}(\xi)|r^{n_k}\right)^2(1-r^2)^p \left(\int_{\SSS} \frac{1}{|1-\langle r\xi, a\rangle|^{2p}}d\sigma(\xi)  \right)dr\right\}\\
&\le& 2n \sup_{a \in \B} \left\{(1-|a|^2)^p \int_0^1
\left(\sum_{k=0}^{\infty}M_kr^{n_k}\right)^2(1-r^2)^p
\left(\int_{\SSS} \frac{1}{|1-\langle r\xi,
a\rangle|^{2p}}d\sigma(\xi) \right)dr\right\}.
\end{eqnarray*}

Applying \cite[Theorem 1.12]{Zhu}, for each $a \in \B$ and $r \in [0, 1]$, we have
\begin{eqnarray*}
\int_{\SSS} \frac{1}{|1-\langle r\xi, a\rangle|^{2p}}d\sigma(\xi)%
&=& \int_{\SSS} \frac{1}{|1-\langle \xi, ar\rangle|^{2p}}d\sigma(\xi)\\
&=& \int_{\SSS} \frac{d\sigma(\xi)}{|1-\langle ar, \xi\rangle|^{n+(2p-n)}}\\
&\simeq& \begin{cases}
\textrm{bounded in} \ \B, &0<p<\frac{n}{2},\\
\log\frac{1}{1-r^2|a|^2} \le \log\frac{1}{1-|a|^2}, & p=\frac{n}{2},\\
(1-r^2|a|^2)^{n-2p} \le (1-|a|^2)^{n-2p}, &\frac{n}{2}<p<n.
\end{cases}
\end{eqnarray*}
It is clear that, for all cases of $p$, we can have
$$
(1-|a|^2)^p\int_{\SSS} \frac{1}{|1-\langle r\xi, a\rangle|^{2p}}d\sigma(\xi) \le M, \quad a \in \B,
$$
where $M$ is a positive number independent of both $a$ and $r$. Now,
applying \cite[Theorem 1]{MM}, we have
\begin{eqnarray*}
\|f\|_p^2%
&\leq& 2nM  \int_0^1  \left(\sum_{k=0}^{\infty}M_kr^{n_k}\right)^2(1-r^2)^p dr\\
&\simeq& 2nM \sum_{k=0}^{\infty} \frac{1}{2^{k(1+p)}} \left( \sum_{2^k \leq n_j<2^{k+1}} M_j\right)^2.
\end{eqnarray*}
Since $f$ is in the Hadamard gaps class, there exists a constant $c>1$ such that $n_{j+1} \geq cn_j$ for all $j \ge 0$. Hence,
 the maximum number of $n_j$'s between $2^k$ and $2^{k+1}$   is less or equal to $[\log_c 2]+1$ for  $k=0,1,2,\cdot\cdot\cdot$.

Since for every $ k \ge 0$, by Cauchy-Schwarz inequality,
$$
\bigg(\sum_{2^k \leq n_j<2^{k+1}} M_j \bigg)^2 \leq   \Big([\log_c
2]+1\Big)\bigg(\sum_{2^k \leq n_j<2^{k+1}} M_j^2\bigg).
$$
  Thus, we have
\begin{equation} \label{Theorem205}
\|f\|_p^2 \lesssim
\sum_{k=0}^{\infty}\bigg(\frac{1}{2^{k(1+p)}}\sum_{2^k\leq n_j
<2^{k+1}} M_j^2\bigg)<\infty,
\end{equation}
which implies $f \in \calN_p$.

Next, we prove that $f \in \calN_p^0$. Let
$$
A= \int_{\B}(1-|\Phi_a(z)|^2)^pdV(z).
$$
We claim that $A \to 0$ as $|a| \to 1^{-}$. By 
\cite[Lemma 1.2]{Zhu}, we have
\begin{eqnarray*}
 \int_{\B}(1-|\Phi_a(z)|^2)^pdV(z)%
&=& \int_{\B} \frac{(1-|a|^2)^p(1-|z|^2)^p}{|1-\langle z, a\rangle |^{2p}}dV(z)\\
&=&(1-|a|^2)^p \int_{\B} \frac{(1-|z|^2)^p}{|1-\langle z,
a\rangle|^{n+1+p+p-n-1}}dV(z).
\end{eqnarray*}
Applying \cite[Theorem 1.12]{Zhu}, we know
\[ \int_{\B} \frac{(1-|z|^2)^{p}}{|1-\langle a, z\rangle|^{n+1+p+p-n-1}}dV(z) \simeq \begin{cases}
\textrm{bounded in} \ \B, & 0<p<n+1,\\
\log\frac{1}{1-|a|^2}, & p=n+1,\\
(1-|a|^2)^{1+n-p}, &p>n+1.
\end{cases} \]
It is clear no matter for what case, $A \to 0$~ as~ $ a \to 1^{-}$.

Put $f_m(z)=\sum_{k=0}^m P_{n_k}(z), m \in \N$ and $K_m=\max\{M_0, M_1, \dots, M_m\}$. Note that for each $a \in \B$,
\begin{eqnarray*}
&& \int_\B |f_m(z)|^2(1-|\Phi_a(z)|^2)^pdV(z)\\
&\le& \int_\B \bigg( \sum_{k=0}^m|P_{n_k}(z)|\bigg)^2(1-|\Phi_a(z)|^2)^pdV(z)\\
&\le& m^2K_m^2 \int_\B (1-|\Phi_a(z)|^2)^pdV(z),
\end{eqnarray*}
which tends to $0$ as $|a| \to 1^{-}$. Hence, $f_m \in \calN_p^0$. Moreover,  by \cite[Corollary 2.6]{HKL}, $\calN_p^0$ is closed and the set of all polynomials is dense in $\calN_p^0$, and hence it suffices to show that $\|f_m-f\|_p \to 0$ as $m \to \infty$. By \eqref{Theorem205}, we have
\begin{equation} \label{Theorem2051}
\|f_m-f\|_p^2 \lesssim
\sum_{k=m'}^{\infty}\bigg(\frac{1}{2^{k(1+p)}}\sum_{2^k\leq n_j
<2^{k+1}} M_j^2\bigg),
\end{equation}
where $m'=\left[ \frac{m+1}{[\log_c 2]+1} \right]$. The result follows from condition (a) and \eqref{Theorem2051}.

\medskip

$\bullet \  (b) \Longrightarrow (c).$ It is obvious.

\medskip

$\bullet \ (c) \Longrightarrow (d).$ Suppose $f \in \calN_p$. As the proof in \cite[Theorem 1]{SS}, we have
\begin{eqnarray*}
\|f\|_p^2%
&=&\sup_{a \in \B} \int_{\B} \left| \sum_{k=0}^{\infty} P_{n_k}(z) \right|^2 (1-|\Phi_a(z)|^2)^pdV(z)\\
&\ge&  \int_{\B} \left| \sum_{k=0}^{\infty} P_{n_k}(z) \right|^2 (1-|z|^2)^pdV(z)\\
&\simeq& \int_{\SSS} \bigg (\sum_{k=0}^\infty \frac{1}{2^{k(1+p)}} \sum_{2^k \leq n_j<2^{k+1}} |P_{n_k}(\xi)|^2 \bigg) d\sigma(\xi)\\
&=& \sum_{k=0}^{\infty} \bigg(\frac{1}{2^{k(1+p)}} \sum_{2^k \leq
n_j <2^{k+1}} L_j^2 \bigg),
\end{eqnarray*}
which implies the desired result.
\end{proof}

\begin{rem}
Generally, when $n>1$, the above conditions in Theorem \ref{Hadamard01} are not equivalent. For example, $(d) \nRightarrow (a)$. Indeed, put
$$
f(z)=\sum_{k=0}^\infty 2^{\frac{k(p+1)}{2}}z_1^{2^k}, \quad z=(z_1, z_2, \dots, z_n) \in \B.
$$
Since $M_k=2^{\frac{k(p+1)}{2}}$, we have
$$
\sum_{k=0}^\infty \bigg (\frac{1}{2^{k(1+p)}} \sum_{2^k \leq n_j
<2^{k+1}} M_k^2\bigg)=\infty.
$$
On the other hand, by \cite[Lemma 1.11]{Zhu}, for each $k \ge 0$, we
have
$$
L_k^2=2^{k(p+1)} \int_{z \in \SSS} |z_1^{2^k}|^2d\sigma(z) = 2^{k(p+1)} \cdot \frac{(n-1)! (2^k)!}{(n-1+2^k)!} \lesssim \frac{2^{k(p+1)}}{2^{k(n-1)}},
$$
which implies
$$
\sum_{k=0}^\infty \bigg (\frac{1}{2^{k(1+p)}} \sum_{2^k \leq n_j
<2^{k+1}} L_k^2\bigg) \lesssim \sum_{k=0}^\infty
\frac{1}{2^{k(n-1)}}<\infty.
$$
\end{rem} 

Next, we consider some special cases when all the conditions in Theorem \ref{Hadamard01} are equivalent.

In \cite{RW}, the authors constructed a sequence of homogeneous
polynomial $\{T_k\}_{k \in \N}$ satisfying $\deg(T_k)=k$,
\begin{equation} \label{RWeq}
\sup_{\xi \in \SSS} |T_k(\xi)|=1 \quad \textrm{and} \quad \int_{\xi
\in \SSS} |T_k(\xi)|^2d\sigma(\xi) \ge \frac{\pi}{2^{2n}}.
\end{equation}
An immediate corollary of Theorem \ref{Hadamard01} is stated as follows.

\begin{cor}
Let $0<p \le n$ and $f(z)=\sum_{k=0}^\infty a_kT_{n_k}(z)$ with
Hadamard gaps, where $a_k \in \C, k \ge 0$. Then the following statements are equivalent.
\begin{enumerate}
\item[(a)] $\dsum_{k=0}^{\infty} \left( \frac{1}{2^{k(1+p)}} \sum_{2^k \leq n_j <2^{k+1}} |a_j|^2\right)<\infty$;
\item[(b)] $f \in \calN_p^0$;
\item[(c)] $f \in \calN_p$.
\end{enumerate}
\end{cor}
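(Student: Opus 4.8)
The plan is to reduce the Corollary directly to Theorem~\ref{Hadamard01} by computing the quantities $M_k$ and $L_k$ associated to the homogeneous polynomials $P_{n_k}(z)=a_k T_{n_k}(z)$ and observing that the defining properties \eqref{RWeq} of the polynomials $\{T_k\}$ constructed in \cite{RW} force $M_k$, $L_k$ and $|a_k|$ to be comparable. First, since $\sup_{\xi\in\SSS}|T_{n_k}(\xi)|=1$, I would compute
\[
M_k=\sup_{\xi\in\SSS}|a_kT_{n_k}(\xi)|=|a_k|,
\]
so that condition (a) of the Corollary coincides \emph{verbatim} with condition (a) of Theorem~\ref{Hadamard01}.

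Next I would estimate $L_k$. Combining the lower bound $\int_{\SSS}|T_{n_k}(\xi)|^2\,d\sigma(\xi)\ge \frac{\pi}{2^{2n}}$ from \eqref{RWeq} with the trivial upper bound $\int_{\SSS}|T_{n_k}(\xi)|^2\,d\sigma(\xi)\le \sup_{\xi\in\SSS}|T_{n_k}(\xi)|^2=1$, I obtain
\[
\frac{\sqrt{\pi}}{2^{n}}\,|a_k|\;\le\; L_k=|a_k|\left(\int_{\SSS}|T_{n_k}(\xi)|^2\,d\sigma(\xi)\right)^{1/2}\;\le\;|a_k|.
\]
Hence $L_k\simeq|a_k|=M_k$ with constants independent of $k$, and consequently condition (d) of Theorem~\ref{Hadamard01}, which is phrased in terms of $L_j^2$, is equivalent to condition (a) of the Corollary, phrased in terms of $|a_j|^2$.

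With these identifications the equivalences follow by closing the implication chain of Theorem~\ref{Hadamard01}. That theorem provides $(a)\Rightarrow(b)\Rightarrow(c)\Rightarrow(d)$; since in the present setting condition (a) of the theorem is literally condition (a) of the Corollary and condition (d) of the theorem is equivalent to it, the chain becomes
\[
(a)\Rightarrow(b)\Rightarrow(c)\Rightarrow(d)\Rightarrow(a),
\]
whence the three statements (a), (b), (c) of the Corollary are all equivalent.

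The proof is essentially bookkeeping, and I do not expect a genuine obstacle. The only point demanding care is the two-sided comparison $L_k\simeq|a_k|$, which is exactly where the lower bound in \eqref{RWeq} enters. This is precisely the reason the polynomials $\{T_k\}$ are introduced: for a generic homogeneous polynomial $L_k$ can be far smaller than $M_k$ — as the Remark following Theorem~\ref{Hadamard01} illustrates — so without a uniform lower bound on $\int_{\SSS}|T_{n_k}|^2\,d\sigma$ one could not bridge the gap between conditions (a) and (d) and upgrade the one-directional chain into a genuine equivalence.
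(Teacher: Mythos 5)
Your proposal is correct and follows essentially the same route as the paper, which simply observes that for these polynomials $M_k\simeq L_k$ (both comparable to $|a_k|$, by \eqref{RWeq}) and then closes the implication chain of Theorem~\ref{Hadamard01}. Your write-up just makes explicit the two-sided bound $\frac{\sqrt{\pi}}{2^{n}}|a_k|\le L_k\le |a_k|=M_k$ that the paper leaves implicit.
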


\begin{proof}
The desired result follows from the fact that for each $k \ge 0$, $
M_k \simeq L_k.$
\end{proof}
Moreover, letting $n=1$, we have the following corollary decribing the functions in $\calN_p(\D)$ with Hadamard gaps.

\begin{cor} \label{cor01}
Let $0<p \le 1$ and $f(z)=\dsum_{k=0}^\infty b_{k}z^{n_k}$ with
Hadamard gaps,  where $b_k \in \C, k \ge 0$. The the following conditions are equivalent.
\begin{enumerate}
\item[(a)] $f \in \calN_p(\D)$;
\item[(b)] $f \in \calN_p^0(\D)$;
\item[(c)] $\dsum_{k=0}^\infty \bigg(\frac{1}{2^{k(1+p)}} \dsum_{2^k \le n_j<2^{k+1}} |b_j|^2\bigg)<\infty$.
\end{enumerate}
Note that the result in \cite[Theorem 3.3 (a)]{Pal} is contained in Corollary \ref{cor01}.
\end{cor}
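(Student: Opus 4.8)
The plan is to obtain Corollary \ref{cor01} as the one-variable specialization of Theorem \ref{Hadamard01}, exploiting the fact that in dimension $n=1$ the two ``gap'' conditions (a) and (d) of that theorem coincide. First I would observe that when $n=1$ the ball $\B$ is the disk $\D$, the automorphism $\Phi_a$ is exactly the M\"obius map $\sig_a$, and $dV$ is the normalized area measure $dA$, so the ball space $\calN_p(\B)$ and its little space reduce precisely to $\calN_p(\D)$ and $\calN_p^0(\D)$. The hypothesis $0<p\le n$ becomes $0<p\le 1$, matching the corollary. A homogeneous polynomial of degree $n_k$ on $\C$ is just the monomial $P_{n_k}(z)=b_k z^{n_k}$, so the whole apparatus of Theorem \ref{Hadamard01} applies to $f(z)=\dsum_{k=0}^\infty b_k z^{n_k}$.

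The key computation is that of $M_k$ and $L_k$ in this setting. Since $|\xi|=1$ on $\SSS$ and $d\sig$ is a probability measure,
$$
M_k=\sup_{\xi\in\SSS}\left|b_k\xi^{n_k}\right|=|b_k|
\qquad\textrm{and}\qquad
L_k=\left(\int_{\SSS}\left|b_k\xi^{n_k}\right|^2 d\sig(\xi)\right)^{1/2}=|b_k|,
$$
so that $M_k=L_k=|b_k|$ for every $k\ge 0$. Consequently conditions (a) and (d) of Theorem \ref{Hadamard01} become literally one and the same statement, namely condition (c) of the present corollary, while theorem-condition (b) is corollary-condition (b) and theorem-condition (c) is corollary-condition (a). The implication chain $(a)\Longrightarrow(b)\Longrightarrow(c)\Longrightarrow(d)$ furnished by Theorem \ref{Hadamard01} then translates, under these identifications, into
$$
(c)\Longrightarrow(b)\Longrightarrow(a)\Longrightarrow(c),
$$
which closes into a cycle and yields the equivalence of (a), (b), and (c).

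I expect no genuine technical obstacle here: the entire content is the identity $M_k=L_k$, which is what fails to hold in higher dimensions and is exactly the source of the strictness illustrated in the Remark following Theorem \ref{Hadamard01}. In other words, the only missing link of the theorem's chain, namely $(d)\Longrightarrow(a)$, is automatic when $n=1$ because there the $L^\infty$ and $L^2$ sizes of a monomial on the circle agree. Finally, the closing remark that \cite[Theorem 3.3(a)]{Pal} is contained in Corollary \ref{cor01} requires only reconciling the normalizations of the two formulations, which is routine and I would dispatch in a sentence.
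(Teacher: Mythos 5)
Your argument is correct and is precisely the paper's proof: the corollary is deduced from Theorem \ref{Hadamard01} via the observation that for $n=1$ one has $M_j=L_j=|b_j|$, which makes conditions (a) and (d) of that theorem coincide and closes the implication chain into an equivalence. Your write-up merely makes explicit the identifications that the paper leaves implicit.
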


\begin{proof}
The desired result follows from the fact that when $n=1$,
$M_j=L_j=|b_{j}|$.
\end{proof}

\vskip 3mm
\subsection{The case when $p>n$.}

By Theorem \ref{basic}, when $p>n$, all $\calN_p$-spaces coincide
with $A^{-\frac{n+1}{2}}$. In this subsection, we consider   a more
general question about the   Hadamard gaps series in $A^{-l}$ for
any $l>0$. We have the following result.

\begin{thm} \label{HadBtype}
Let $l>0$ and $f(z)=\sum_{k=0}^\infty P_{n_k}(z)$ with Hadamard
gaps, where $P_{n_k}$ is a homogeneous polynomial of degree $n_k$.
Then the following assertions hold.
\begin{enumerate}
\item[(a)] $f \in A^{-l}$ if and only if $\dsup_{k \geq 1} \frac{M_k}{n_k^l}<\infty$;
\item[(b)] $f \in A^{-l}_0$ if and only if $\dlim_{k \to \infty} \frac{M_k}{n_k^l}=0$.
\end{enumerate}
Note that the result in \cite[Theorem 3.3 (b)]{Pal} is a particular case of the assertion $(a)$ in Theorem \ref{HadBtype}.
\end{thm}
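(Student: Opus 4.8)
The plan is to reduce both parts to two ingredients: a one–variable Cauchy coefficient estimate applied along complex slices, and the elementary lacunary bound
\[
(1-r^2)^l \sum_{k} n_k^l\, r^{n_k} \lesssim 1, \qquad 0 \le r <1,
\]
valid for any Hadamard gap sequence $\{n_k\}$ with gap ratio $c>1$ (the implied constant depending only on $l$ and $c$). Throughout I use $(1-|z|^2)^l \simeq (1-|z|)^l$ near $\SSS$ and write $r=|z|$.

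For the \emph{necessity} in $(a)$ and $(b)$, I would fix $\xi \in \SSS$ and pass to the slice $F_\xi(\lam)=f(\lam\xi)$, $\lam\in\D$. Since each $P_{n_k}$ is homogeneous of degree $n_k$, this is an ordinary lacunary power series $F_\xi(\lam)=\sum_k P_{n_k}(\xi)\lam^{n_k}$, and $|F_\xi(\lam)|=|f(\lam\xi)|$ inherits the $A^{-l}$ bound because $|\xi|=1$. Cauchy's formula gives
\[
P_{n_k}(\xi)=\frac{1}{2\pi\, r^{n_k}}\int_0^{2\pi} F_\xi(re^{i\theta})\,e^{-in_k\theta}\,d\theta,
\]
so $|P_{n_k}(\xi)| \le r^{-n_k}(1-r^2)^{-l}\,\sup_\theta\big[|f(re^{i\theta}\xi)|(1-r^2)^l\big]$. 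Choosing the optimal radius $r_k^2=n_k/(n_k+2l)$, which maximizes $r^{n_k}(1-r^2)^l$, yields $r_k^{-n_k}(1-r_k^2)^{-l}\simeq n_k^l$. Hence if $f\in A^{-l}$ then $M_k=\dsup_{\xi}|P_{n_k}(\xi)|\lesssim n_k^l\,|f|_l$, giving $\dsup_k M_k/n_k^l<\infty$; and if $f\in A_0^{-l}$, then since $r_k\to1$ the supremum factor tends to $0$ uniformly in $\xi$, forcing $M_k/n_k^l\to0$. Note this half requires no gap hypothesis.

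For the \emph{sufficiency} in $(a)$ and $(b)$, the gaps enter. I would estimate $|f(z)|\le\sum_k M_k r^{n_k}$, group indices into dyadic blocks $2^j\le n_k<2^{j+1}$ (each with at most $[\log_c 2]+1$ terms), and reduce the lacunary bound to $\sum_j 2^{jl}r^{2^j}\lesssim(1-r)^{-l}$. This follows from $r^{2^j}\le e^{-(1-r)2^j}$ and splitting the sum at the scale $2^j\approx 1/(1-r)$: the low part is a geometric sum dominated by its top term $\simeq(1-r)^{-l}$, and the high part is $\lesssim(1-r)^{-l}\sum_{m\ge0}2^{ml}e^{-2^m}$. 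For $(a)$, $M_k\le Cn_k^l$ then gives $(1-|z|^2)^l|f(z)|\lesssim C$, so $f\in A^{-l}$. For $(b)$, given $\eps$ pick $K$ with $M_k\le\eps n_k^l$ for $k\ge K$: the tail contributes $\lesssim\eps$ after multiplying by $(1-|z|^2)^l$, while the finite head is annihilated by $(1-|z|^2)^l\to0$, so $\limsup_{|z|\to1}(1-|z|^2)^l|f(z)|\lesssim\eps$ and $f\in A_0^{-l}$.

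The main obstacle is the lacunary estimate itself, namely establishing $\sum_k n_k^l r^{n_k}\lesssim(1-r)^{-l}$ with a constant depending only on $l$ and the gap ratio $c$; once this and the slice/Cauchy-radius optimization are in hand, everything else is routine bookkeeping.
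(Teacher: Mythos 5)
Your proposal is correct, and the necessity halves (for both (a) and (b)) follow essentially the paper's own route: slice along $\lam\mapsto f(\lam\xi)$, apply the Cauchy coefficient estimate on the circle $|\lam|=r$, and choose $r$ so that $r^{-n_k}(1-r^2)^{-l}\simeq n_k^l$ (you take the exact maximizer $r_k^2=n_k/(n_k+2l)$; the paper takes $r=1-1/n_k$ and uses $(1-1/n_k)^{n_k}\ge 1/4$ --- the same estimate up to constants). Your observation that this direction needs no gap hypothesis is accurate and is implicit in the paper as well. Where you genuinely diverge is the sufficiency direction. The paper multiplies $|f(z)|$ by $(1-|z|)^{-1}=\sum_s|z|^s$ to form the Cauchy product $\sum_t\big(\sum_{n_j\le t}n_j^l\big)|z|^t$, bounds the partial sums $\sum_{n_j\le t}n_j^l\le\frac{c^l}{c^l-1}t^l$ using the gap ratio, and then invokes the binomial identity $\sum_t(-1)^t\binom{-l-1}{t}|z|^t=(1-|z|)^{-l-1}$ together with $t^l\lesssim(-1)^t\binom{-l-1}{t}$ (via the Gamma-function limit) to conclude. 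You instead prove the lacunary bound $\sum_k n_k^l r^{n_k}\lesssim(1-r)^{-l}$ directly by dyadic blocking (at most $[\log_c 2]+1$ indices per block) and the elementary split of $\sum_j 2^{jl}r^{2^j}$ at $2^j\approx(1-r)^{-1}$ using $r^{2^j}\le e^{-(1-r)2^j}$. Both arguments are sound; yours is more self-contained (no Gamma-function or negative-binomial bookkeeping) and it also streamlines (b): you get $\limsup_{|z|\to1}(1-|z|^2)^l|f(z)|\lesssim\eps$ directly from the head/tail split, whereas the paper approximates $f$ by partial sums in the $A^{-l}$ norm and appeals to the closedness of $A_0^{-l}$. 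The only ingredient you flag as an obstacle --- the lacunary estimate with constant depending only on $l$ and $c$ --- is exactly what your dyadic argument supplies, so the proof is complete as outlined.
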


\begin{proof}

(a) \textbf{Necessity.} Suppose $f \in A^{-l}$. Fix a $\xi \in \SSS$ and denote
$$
f_\xi(w)=\sum_{k=0}^\infty P_{n_k}(\xi) w^{n_k}=\sum_{k=0}^\infty
P_{n_k}(\xi w), ~~ w \in \D.
$$
Since $f \in H(\B)$, it known that for a fixed $\xi \in
\SSS$, $f_\xi(w)$ is holomorphic in $\D$ (see, e.g., \cite{Rud}).
Hence, for any $r \in (0,
1)$, we have
\begin{eqnarray} \label{ineq0500}
M_k%
&=& \sup_{ \xi \in   \SSS} |P_{n_k}(\xi)|=\sup_{ \xi \in   \SSS} \left |\frac{1}{2\pi i} \int_{|w|=r} \frac{f_\xi(w)}{w^{n_k+1}}dw\right| \\
&=& \frac{1}{2\pi} \sup_{ \xi \in \SSS}\left |\int_{|w|=r} \frac{f( \xi w)}{w^{n_k+1}}dw\right|  \nonumber \\
&\le& \frac{1}{2\pi}\sup_{ \xi \in   \SSS} \int_{|w|=r} \frac{|f( \xi w)|}{r^{n_k+1}}|dw|  \nonumber \\
& \le & \frac{1}{2\pi}\sup_{ \xi \in  \SSS} \int_{|w|=r} \frac{|f( \xi w)| (1-|\xi w|^2)^{l}}{r^{n_k+1}(1-r)^{l}}|dw| \nonumber \\
&\le& \frac{|f|_l}{r^{n_k}(1-r)^{l}}. \nonumber
\end{eqnarray}

In \eqref{ineq0500}, letting $r=1-\frac{1}{n_k}$, we have
$$
M_k \le \frac{|f|_l \cdot n_k^l}{(1-\frac{1}{n_k})^{n_k}}.
$$
Thus, for each $k \ge 2$,
$$
\frac{M_k}{n_k^l} \le \frac{|f|_l}{(1-\frac{1}{n_k})^{n_k}} \le 4|f|_l,
$$
which implies that
$$
\sup_{k \ge 1} \frac{M_k}{n_k^l} \le \max \left\{ \frac{M_1}{n_1^l},  4|f|_l \right\}<\infty.
$$

\medskip

\textbf{Sufficiency.} Suppose that $\sup_{k \ge 1} \frac{M_k}{n_k^l}<\infty$. Then
$$
|f(z)|=\left|\sum_{k=0}^\infty P_{n_k} \left(\frac{z}{|z|}\right) |z|^{n_k}\right| \leq \sum_{k=0}^{\infty} M_k|z|^{n_k} \lesssim \sum_{k=0}^{\infty} n_k^l |z|^{n_k}.
$$
Thus,
$$
\frac{|f(z)|}{1-|z|} \lesssim \bigg(\sum_{k=0}^{\infty} n_k^l
|z|^{n_k}\bigg) \bigg(\sum_{s=0}^{\infty} |z|^s
\bigg)=\sum_{t=0}^{\infty} \bigg(\sum_{n_j \leq t}n_j^l \bigg)
|z|^t.
$$
Since
$$
\lim_{k \to \infty} \frac{k^l k!}{l (l+1) \dots (l+k)}=\Gamma (l),  \ l>0,
$$
we have
$$
\sup_{k \in \N} \left(\frac{k^l k!}{(k+l)(k+l-1) \dots (l+1)}\right) \leq M,
$$
where $M$ is a positive number depending on $l$. Hence, we have for
each $k \ge 0$,
\begin{eqnarray} \label{ineq009}
\frac{k^l}{(-1)^k {-l-1 \choose k}}%
&=&\frac{k^l k!}{(-1)^k (-l-1)(-l-2) \dots (-l-k)} \nonumber\\
&=&\frac{k^l k!}{(k+l)(k+l-1) \dots (l+1)} \leq M,
\end{eqnarray}
where ${\alpha \choose k}=\frac{\alpha(\alpha-1)\dots (\alpha-k+1)}{k!}, \alpha \in \R$.

Moreover, since $f$ is in Hadamard gaps class, there exists a constant $c>1$ such that $n_{j+1} \geq cn_j$ for all $j \ge 0$. Hence
\begin{equation} \label{ineq0091}
 \frac{1}{k^l} \bigg(\sum_{n_j \leq k} n_j^l \bigg) \leq \sum_{m=0}^\infty \bigg(\frac{1}{c^l}\bigg)^m =\frac{c^l}{c^l-1}.
\end{equation}
Combining \eqref{ineq009} and \eqref{ineq0091}, we have
$$
\frac{k^l}{(-1)^k {-l-1 \choose k}} \cdot \frac{1}{k^l}
\bigg(\sum_{n_j \leq k} n_j^l\bigg)  \leq \frac{Mc^l}{c^l-1},
$$
which implies
\begin{equation} \label{ineq010}
\sum_{n_j \leq k} n_j^l \le (-1)^k {-l-1 \choose k}
\frac{Mc^l}{c^l-1}.
\end{equation}

Hence, for any $z\in \B$, by \eqref{ineq010} we have
$$
\frac{|f(z)|}{1-|z|} \lesssim \frac{Mc^l}{c^l-1} \cdot
\sum_{t=0}^{\infty} (-1)^t  {-l-1 \choose t}
|z|^t=\frac{Mc^l}{c^l-1} \cdot \frac{1}{(1-|z|)^{l+1}},
$$
which implies
$$
|f(z)|(1-|z|^2)^l \lesssim \frac{Mc^l}{c^l-1},
$$
and hence $f \in A^{-l}$.

\medskip

(b) \textbf{Necessity.} Suppose $f \in A^{-l}_0$, that is, for any $\varepsilon>0$, there exists a $\del \in (0, 1)$, when $\del<|z|<1$,
$$
|f(z)|(1-|z|^2)^l< \varepsilon.
$$
Take $N_0 \in \N$ satisfying $\del<1-\frac{1}{n_k}<1$ when $k>N_0$.
Then for any $k > N_0$ and $r=1-\frac{1}{n_k}$, applying the proof
in part $(a)$, we have
$$
M_k \le \frac{n_k^l}{ (1-\frac{1}{n_k})^{n_k}}  \cdot \sup_{
\del<|z|<1}  |f(z)|(1-|z|^2)^l   < \frac{ \varepsilon n_k^l}{
(1-\frac{1}{n_k})^{n_k}},
$$
which implies
$$
\frac{M_k}{n_k^l} \le \frac{\varepsilon}{ (1-\frac{1}{n_k})^{n_k}}
\le 4 \varepsilon, ~~k>N_0.
$$
 Hence we have  $\lim_{k \to \infty} \frac{M_k}{n_k^l}=0$.

\textbf{Sufficiency.} Since $\lim_{k \to \infty}
\frac{M_k}{n_k^l}=0$, it is clear that $\sup_{k \ge 1}
\frac{M_k}{n_k^l}<\infty$ and hence by part (a), we have $f \in
A^{-l}$. For any $\varepsilon>0$, there exists a $N_0 \in \N$
satisfying
$$
\frac{M_m}{n_m^l}<\varepsilon,
$$
when $m>N_0$.  For each $m \in \N$, put $f_m(z)=\sum_{k=0}^m
P_{n_k}(z)$. Note that
\begin{eqnarray*}
|f_m(z)|(1-|z|^2)^l%
&\le& \bigg(\sum_{k=0}^m |P_{n_k}(z)|\bigg)(1-|z|^2)^l \\
&=&\left(\sum_{k=0}^m \left| P_{n_k}\left(\frac{z}{|z|}\right) |z|^{n_k}\right|\right)(1-|z|^2)^l \\
&\le& K(1-|z|^2)^l \sum_{k=0}^m |z|^{n_k} \le Km(1-|z|^2)^l,
\end{eqnarray*}
where $K=\max\{M_0, M_1, M_2, \dots, M_m\}$. Hence, $\lim_{|z| \to 1^{-}} |f_m(z)|(1-|z|^2)^l=0$, that is, for each
$m \in \N$, $f_m \in A^{-l}_0$ and hence it suffices to show that $|f_m-f|_l \to 0$ as $m \to \infty$. Indeed, for $m>N_0$, we have
$$
|f_m(z)-f(z)|= \left| \sum_{k=m+1}^\infty P_{n_k}(z)\right| \le \sum_{k=m+1}^\infty M_k |z|^{n_k}\le \varepsilon \sum_{k=m+1}^\infty n_k^l |z|^{n_k}.
$$
Applying the proof in part (a), we have
\begin{eqnarray*}
\frac{|f_m(z)-f(z)|}{1-|z|}%
&\le& \eps \bigg(\sum_{k=m+1}^\infty n_k^l |z|^{n_k} \bigg)  \bigg( \sum_{s=0}^\infty |z|^s\bigg)=
\eps \sum_{l=n_{m+1}}^\infty \bigg(\sum_{n_{m+1} \le n_j \le l} n_j^l \bigg) |z|^l\\
&\le& \eps \sum_{t=0}^\infty \bigg(\sum_{n_j \le t} n_j^l \bigg)
|z|^t \le M'\frac{\eps}{(1-|z|)^{l+1}},
\end{eqnarray*}
where $M'$ is a positive number independent of $m$. Hence, when
$m>N_0$, we have $|f_m-f|_l \le M' \varepsilon$, which implies that
$f \in A^{-l}_0$.
\end{proof}

\vskip 3mm

 \noindent {\bf Acknowledgments.}  The second author was
supported by the Macao Science and Technology Development
Fund (No.083/2014/A2).

\end{document}